\numberwithin{equation}{section}
\def\f2{\mathbb F_2}
\def\a2{\mathcal A_2}
\def\z{\mathbb Z}
\def\u{\mathscr U}
\newcommand{\cke}[1]{\mathrm{Coker}\left(#1\right)}
\newcommand{\ex}[4]{\mathrm{Ext}_{#1}^{#2}\left(#3,#4\right)}
\newcommand{\ho}[3]{\mathrm{Hom}_{#1}\left(#2,#3\right)}
\newcommand{\ke}[1]{\mathrm{Ker}\left(#1\right)}
\newcommand{\aug}[2]{\mathrm{Aug}_{#1}\left(#2\right)}
\newcommand{\divn}[2]{\mathrm{Div}_{#1}\left(#2\right)}
\newcommand{\keywords}[1]{%
  \let\@@oldtitle\@title%
  \gdef\@title{\@@oldtitle\footnotetext{\emph{Key words and phrases.} #1.}}%
}
\newtheorem*{theorem*}{Theorem}
\newtheorem*{algorithm*}{BG algorithm}
\newtheorem*{property*}{Property}
\newtheorem*{conj*}{Conjecture}
\newtheorem{thm}{Theorem}[section]
\newtheorem{lem}[thm]{Lemma}
\newcounter{nmdthmcnt}
\cleardoublepage\thispagestyle{plain}\null\begin{flushleft}%
\title{Algebraic EHP sequences}
\author{{\selectlanguage{vietnam}NGUYỄN Thế Cường}
}
\keywords{Steenrod algebra, unstable modules, EHP sequence, homotopy groups of spheres}
\date{12 December 2015}
\begin{document}
\maketitle

\begin{abstract}
The James fibrations give rise to the geometric EHP sequences of homotopy groups of spheres.  Using techniques from the Lambda algebra, \cite{BCKQRS66} shows that there are similar long exact sequences of Ext groups defining the $E_{2}-$page of the Bousfield-Kan spectral sequence (also known as the unstable Adams spectral sequence) computing homotopy groups of spheres. In this paper, we give another proof for this phenomenon. 
\end{abstract}

\section{Introduction}
The James construction $J(X)$ for a connected topological space $X$ gives a model for the loop of the suspension $\Omega\Sigma X$. It allows to define the Hilton-Hopf invariants $\Omega\Sigma X\to \Omega\Sigma X^{\wedge n},$ which induce the famous theorem of Milnor and Hilton:
$$\Sigma\Omega\Sigma X\cong \Sigma\bigvee_{n\geq 1}X^{\wedge n}.$$
When $X$ is the sphere $S^{n}$, the second Hilton-Hopf invariant induces a fibration sequence after localized at prime $2$:
$$S^{n}\to \Omega\Sigma S^{n}\to \Omega\Sigma \left(S^{n}\wedge S^{n}\right).$$
The long exact sequence of homotopy groups associated to this fibration sequence is known as the EHP sequence for $S^{n}$. 

Recall that the category of unstable modules is denoted by $\u$. The most typical examples of unstable modules are modulo $2$ singular cohomology of topological spaces. Denote by $\Sigma^{n}\f2$ the reduced cohomology $\tilde{\mathrm{H}}^{*}(S^{n};\z/2)$.  The unstable Adams spectral sequence (UASS) is formulated as follows:
$$E_{2}^{s,t}(S^{n})=\ex{\u}{s}{\Sigma^{n}\f2}{\Sigma^{t}\f2}\Longrightarrow \pi_{t-s}(S^{n})^{\wedge}_{2}.$$
In \cite{BCKQRS66}, it was shown that $E_{2}^{*,*}(S^{n})$ is isomorphic to the homology of a differential module $\Lambda(n)$, obtained as a submodule of the Lambda algebra $\Lambda$. For each $n$, there is a short exact sequence:
$$0\to \Lambda(n)\to\Lambda(n+1)\to\Lambda(2n+1)\to 0.$$
Therefore there exists a long exact sequence for each $n$:
\begin{equation}\label{EHP}
\cdots\xrightarrow{H}E_{2}^{s-2,t}(S^{2n+1})\xrightarrow{P}E_{2}^{s,t}(S^{n})\xrightarrow{E}E_{2}^{s,t+1}(S^{n+1})\xrightarrow{H}E_{2}^{s-1,t}(S^{2n+1})\xrightarrow{P}\cdots\tag{EHP}
\end{equation}
and we call this the algebraic EHP sequence for $S^{n}$.

In \cite{Cuo14b}, the author gives an algorithm to compute the minimal injective resolution in the category $\u$ of $\Sigma^{t}\f2$ based on Mahowald short exact sequences. We will use this method to construct long exact sequences similar to those of \eqref{EHP}.

\section{Unstable modules and BG algorithm}
The Steenrod algebra $\mathcal{A}_{2}$ is generated by the cohomology operations $Sq^{i}$, homogeneous of degree $i\geq 0$ and the unity $Sq^{0}$, subject to the Adem relations. An unstable module $M$ is an $\f2-$graded module over the Steenrod algebra satisfying the instability condition: $\forall x\in M^{n}, Sq^{i}x=0$ as soon as $i>n$. The category $\u$ of unstable modules is an abelian category with enough injectives.  Denote by $\Sigma^{n}\f2$ the reduced singular cohomology of the sphere $S^{n}$. We write $\Sigma^{n}M$ for the tensor product $\Sigma^{n}\f2\otimes M$. The unstable injective envelope $J(n)$ of $\Sigma^{n}\f2$, called Brown-Gitler modules,  form a system of injective co-generators for $\u$. These modules are connected by Mahowald short exact sequences:
\begin{equation}\label{Mahowald}
0\to\Sigma J(n)\to J(n+1)\to J(\frac{n+1}{2})\to 0
\tag{Mahowald}
\end{equation}
where $J(n+1/2)$ is $J(k)$ if $n+1=2k$ and is trivial otherwise. Moreover, these sequences are natural in the following sense: a morphism $\Sigma J(n)\to \Sigma J(m)$ gives rise to a morphism of Mahowald exact sequences. The $J(n)$ is trivial on degrees strictly greater than $n$; $J(n)^{n}\cong \f2\left\langle x_{0}^{n}\right\rangle$ and if $x\in J(n)^{m},m\leq n$ then there exists a non-trivial Steenrod operation $\theta$ such that $\theta x=x^{n}_{0}.$ As a consequence, a morphism $J(n)\to J(m)$ is determined by a Steenrod operation of degree $n-m$. The surjection in \eqref{Mahowald} is determined 
by $Sq^{n+1/2}$ and is denoted by $\bullet Sq^{n+1/2}$.

At this point, my exposition will become more technical. I will therefore
recall some basic notations. What follows is well explained in \cite[Chapter 3]{Cuo14b}. First we will write $J(n)_{\alpha}$ with some index $\alpha$ for a direct summand $J(n)$ of an unstable module $M$ if there might be more than one copy of $J(n)$ or to distinguish this summand to a summand $J(n)$ of another unstable module. If a morphism $J(n)\to J(m)$ is determined by the Steenrod operation  $\theta$, we denote by $\bullet \theta$ this morphism. We set:
$$\aug{}{\bigoplus_{\alpha}J(n_{\alpha})}:=\bigoplus_{\alpha}J(1+n_{\alpha});\quad \divn{}{\bigoplus_{\alpha}J(n_{\alpha})}:=\bigoplus_{\alpha}J(\frac{n_{\alpha}}{2}).$$
An injective resolution $\{I^{n},\partial^{n}:I^{n}\to I^{n+1}\}_{n\geq 0}$ of $M$ is minimal if $I^{0}$ is the injective hull of $M$, $I^{1}$ is the injective hull of $I^{0}/M$ and $I^{j}$ is the injective hull of $\cke{\partial^{j-2}}$ for every $j\geq 2$.  For each non-negative integer $n$, denote by $BG(n):=\left\{BG(n)^{s},\partial_{n}^{s}:BG(n)^{s}\to BG(n)^{s+1},s\geq 0\right\}$ the minimal injective resolution of $\Sigma^{n}\f2$. Recall that each module $BG(n)^{s}$ is a finite direct sum of Brown-Gitler modules. Denote by $DA(n+1)$ the collection of injective unstable modules 
$$DA(n+1)^{s}=\aug{}{BG(n)^{s}}\bigoplus\divn{}{\aug{}{BG(n)^{s-1}}}.$$
Because of the naturality of Mahowald sequences, the morphisms $\partial_{n}^{s}$ induce:
\begin{align*}
\aug{}{BG(n)^{s}}&\xrightarrow{\delta_{1}^{s}} \aug{}{BG(n)^{s+1}},\\
\divn{}{\aug{}{BG(n)^{s}}}&\xrightarrow{\delta_{2}^{s}} \divn{}{\aug{}{BG(n)^{s+1}}},\\
\aug{}{BG(n)^{s}}&\xrightarrow{\delta_{3}^{s}} \divn{}{\aug{}{BG(n)^{s}}}.
\end{align*}
\begin{thm}[{\cite[Lemma 1.5.0.24]{Cuo14b}}]
There exist morphisms
		$$\divn{}{\aug{}{BG(n)^{s-1}}}\xrightarrow{\delta_{4}^{s-1}}\aug{}{BG(n)^{s+1}}$$
		such that
		$$\partial^{s}:=\begin{pmatrix}
		\delta_{1}^{s}&\delta_{4}^{s-1}\\
		\delta_{2}^{s-1}&\delta_{3}^{s}
		\end{pmatrix}$$ 
		make $\{DA(n+1)^{s},\partial^{s}\}$ an injective resolution of $\Sigma^{n+1}\f2$.
\end{thm}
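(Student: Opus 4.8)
The plan is to realise $\{DA(n+1)^s,\partial^s\}$ as the total complex of a (twisted) double complex assembled from the minimal resolution $BG(n)$ and the Mahowald sequences, and then to extract exactness from a spectral sequence. Since each $DA(n+1)^s$ is a finite direct sum of Brown--Gitler modules it is automatically injective, so the only things to prove are that $\partial^{s+1}\partial^s=0$ and that the cohomology of the resulting complex is $\Sigma^{n+1}\f2$ concentrated in degree $0$.

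For the construction of $\delta_4$ I would start from the Mahowald sequence attached to each term,
$$0\to\Sigma BG(n)^s\xrightarrow{\iota_s}\aug{}{BG(n)^s}\xrightarrow{\delta_3^s}\divn{}{\aug{}{BG(n)^s}}\to0,$$
together with the naturality of \eqref{Mahowald} applied to $\Sigma\partial_n^s$: this produces the induced morphism of short exact sequences whose columns are $\delta_1^s,\delta_2^s$, and in particular $\delta_1^s$ restricts to $\Sigma\partial_n^s$ on the submodule $\Sigma BG(n)^s=\ke{\delta_3^s}$. Consequently $\delta_1^s\delta_1^{s-1}$ vanishes on $\Sigma BG(n)^{s-1}$, since there it restricts to $\Sigma(\partial_n^s\partial_n^{s-1})=0$; as $\delta_3^{s-1}$ is an epimorphism with kernel exactly $\Sigma BG(n)^{s-1}$, the composite factors uniquely as $\delta_1^s\delta_1^{s-1}=\delta_4^{s-1}\delta_3^{s-1}$ for a well-defined $\delta_4^{s-1}\colon\divn{}{\aug{}{BG(n)^{s-1}}}\to\aug{}{BG(n)^{s+1}}$. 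This is precisely the term repairing the non-functoriality of $\aug{}{-}$: augmenting shifts the instability degrees that forced $\partial_n^s\partial_n^{s-1}=0$, so $\delta_1^s\delta_1^{s-1}$ need not vanish.

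With $\delta_4$ so defined I would verify $\partial^{s+1}\partial^s=0$ identity by identity (over $\f2$, so signs play no role). The square $\delta_3^{s+1}\delta_1^s=\delta_2^s\delta_3^s$ holds because both sides are the maps induced by $\partial_n^s$ through the naturality of Mahowald; the identity $\delta_1^{s+1}\delta_1^s=\delta_4^s\delta_3^s$ is the definition of $\delta_4^s$; and the two remaining relations $\delta_3^{s+1}\delta_4^{s-1}=\delta_2^s\delta_2^{s-1}$ and $\delta_1^{s+1}\delta_4^{s-1}=\delta_4^s\delta_2^{s-1}$ follow by precomposing with the epimorphism $\delta_3^{s-1}$ and substituting the commuting square and the defining identity for $\delta_4$ repeatedly, then cancelling $\delta_3^{s-1}$.

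Finally, for exactness I would put the decreasing filtration by the $BG(n)$-resolution degree $s$ (each component of $\partial^s$ preserves or increases this degree, so the columns give a filtration by subcomplexes). The associated graded differential is $\delta_3$, so the $E_0$-columns are the two-term complexes $\aug{}{BG(n)^s}\xrightarrow{\delta_3^s}\divn{}{\aug{}{BG(n)^s}}$; by the Mahowald sequence these have cohomology $\ke{\delta_3^s}=\Sigma BG(n)^s$ in the augmentation spot and $0$ in the division spot. Hence $E_1$ is the complex $\Sigma BG(n)^\bullet$, and because $\delta_1$ restricts to $\Sigma\partial_n$ the differential $d_1$ is exactly $\Sigma\partial_n$. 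Since $\Sigma$ is exact and $BG(n)$ resolves $\Sigma^n\f2$, the $E_2$-page is $\Sigma^{n+1}\f2$ in total degree $0$ and zero elsewhere; the filtration being finite in each total degree, the spectral sequence collapses, which shows that $\{DA(n+1)^s,\partial^s\}$ is an injective resolution of $\Sigma^{n+1}\f2$. The one genuinely delicate point is the construction of $\delta_4$ and the check that the \emph{natural} choices of $\delta_1,\delta_2,\delta_3$ make the higher identities hold exactly; once these are in place the rest is formal homological algebra.
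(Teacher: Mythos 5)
The paper itself contains no proof of this theorem: it is quoted verbatim from \cite[Lemma 1.5.0.24]{Cuo14b}, so there is no internal argument to measure yours against. Judged on its own terms, your proof is correct and is the natural completion of what the paper only sketches: the maps $\delta_1^s,\delta_2^s,\delta_3^s$ come from naturality of the Mahowald sequences exactly as stated in the text; your $\delta_4^{s-1}$ is well defined because $\delta_1^{s-1}$ restricts to $\Sigma\partial_n^{s-1}$ on $\ke{\delta_3^{s-1}}=\Sigma BG(n)^{s-1}$, so $\delta_1^{s}\delta_1^{s-1}$ kills that kernel and factors uniquely through the epimorphism $\delta_3^{s-1}$; the four component identities of $\partial^{s+1}\partial^{s}=0$ do follow by the substitutions you indicate (cancelling the epimorphism $\delta_3^{s-1}$ is legitimate); and the filtration by $BG$-degree is finite in each total degree, has $E_1$ equal to $\Sigma BG(n)^{\bullet}$ with $d_1=\Sigma\partial_n$, hence $E_2=\Sigma^{n+1}\mathbb{F}_2$ concentrated in degree $0$, which together with the injectivity of finite sums of Brown--Gitler modules gives the resolution. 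I see no gap.
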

Remark that $\delta_{3}^{s}$ is the diagonal matrix where coefficients on the diagonal are given by the surjections of Mahowald sequences. Let $J(m)$ be a direct summand of $DA(n+1)^{s}$ and let $J(k)$ be that of $DA(n+1)^{s+1}$. Then $J(m)$ is said to be  connected to $J(k)$ by $\theta$ if $\theta$ is the induced morphism 
$$J(m)\hookrightarrow DA(n+1)^{s}\xrightarrow{\partial^{s}}DA(n+1)^{s+1}\twoheadrightarrow J(k).$$
\begin{algorithm*}[{\cite[Theorem 3.1.1.1 and Lemma 3.1.1.8]{Cuo14b}}]
	\begin{enumerate}
		\item Let $J(m)_{1}$ be a direct summand of $\divn{}{\aug{}{BG(n)^{s-1}}}$ and let $J(m)_{2}$ be that of $\aug{}{BG(n)^{s+1}}$. There exist a unique direct summand $J(2m)_{3}$ of $\aug{}{BG(n)^{s-1}}$ such that $J(2m)_{3}$ is connected to $J(m)_{1}$ by $\bullet Sq^{m}$. Then $J(m)_{1}$ is connected to $J(m)_{2}$ by the identity if only if $J(2m)_{3}$ is connected to $J(m)_{2}$ by a morphism of the form $\bullet (Sq^{m}+\theta)$ for some Steenrod operation $\theta$. These are the only possible identity coefficients of $\partial^{s}$. In this case the acyclic complex 
		\begin{equation}\label{subcomp}
			J:=\{J^{r},\alpha_{r}|J^{r}=0,r\neq s,s+1;J^{s}\cong J^{s+1}\cong J(m) \}\tag{Subcomplex}
		\end{equation}
		is a sub-complex of $\{DA(n+1)^{s},\partial^{s}\}.$
		\item Set $D:=\{DA(n+1)^{s},\partial^{s}\}$. While there still exists a direct summand $J(m)_1$ of $DA(n+1)^{s}$ connecting to $J(m)_2$ of $DA(n+1)^{s+1}$ by the identity, then do $D:=D/J$ where $J$ is the acyclic complex \eqref{subcomp}.
		\item The output $D$ of the previous step is the minimal injective resolution $BG(n+1).$
	\end{enumerate}
\end{algorithm*}
\section{The construction of algebraic EHP sequences}
We index all the direct summand isomorphic to $J(n)$ of $BG(t)^{s}$ by the set $A_{(t,n,s)}$. Then:
\begin{equation}
\ex{\u}{s}{\Sigma^{n}\f2}{\Sigma^{t}\f2}\cong \ho{\u}{\Sigma^{n}\f2}{\bigoplus_{A_{(t,n,s)}}J(n)_{\alpha}}.\tag{ext}
\end{equation}
From the construction of $DA(t+1)$, the sum of all the direct summands $J(n)$ of $DA(t+1)^{s}$ is 
$$B_{(n,t+1,s)}:=C_{(n,t+1,s)}\bigoplus D_{(n,t+1,s)}$$
where 
\begin{align*}
C_{(n,t+1,s)}:=\bigoplus_{A_{(t,n-1,s)}}J(n)_{\alpha}\quad\textup{ and }\quad D_{(n,t+1,s)}:=\bigoplus_{A_{(t,2n-1,s-1)}}J(n)_{\omega}.
\end{align*}
Let $X\in \left\{C_{(n,t+1,s)},D_{(n,t+1,s)}\right\}$ and $Y\in \left\{C_{(n,t+1,s+1)},D_{(n,t+1,s+1)}\right\}$, we denote by $\partial_{X,Y}^{s}$ the composition:
$$X\hookrightarrow DA(t+1)^{s}\xrightarrow{\partial^{s}}DA(t+1)^{s+1}\twoheadrightarrow Y.$$ 
Therefore the BG algorithm shows that the maps
$$\partial_{DA}^{s}:=\begin{pmatrix}
\partial_{C_{(n,t+1,s)},C_{(n,t+1,s+1)}}^{s} & \partial_{D_{(n,t+1,s)},C_{(n,t+1,s+1)}}^{s}\\
\partial_{C_{(n,t+1,s)},D_{(n,t+1,s+1)}}^{s} & \partial_{D_{(n,t+1,s)},D_{(n,t+1,s+1)}}^{s}
\end{pmatrix}=\begin{pmatrix}
0 & \partial_{D_{(n,t+1,s)},C_{(n,t+1,s+1)}}^{s}\\
0 & 0
\end{pmatrix}$$
make $\left\{B_{(n,t+1,s)},\partial_{DA}^{s}\right\}$ a complex. 
Hence
\begin{align*}
\ex{\u}{s}{\Sigma^{n}\f2}{\Sigma^{t+1}\f2}&\cong \mathrm{H}_{s}\ho{\u}{\Sigma^{n}\f2}{DA(t+1)},\\
&\cong \mathrm{H}_{s}\ho{\u}{\Sigma^{n}\f2}{B_{(n,t+1,s)}},\\
&\cong \ke{\left(\partial_{D_{(n,t+1,s)},C_{(n,t+1,s+1)}}^{s}\right)_{*}}\bigoplus \cke{\left(\partial_{D_{(n,t+1,s-1)},C_{(n,t+1,s)}}^{s-1}\right)_{*}}.
\end{align*}
We will now explicit the morphisms $\left(\partial_{D_{(n,t+1,s)},C_{(n,t+1,s+1)}}^{s}\right)_{*}$. The source of this morphism is described as follows:
\begin{align*}
\ho{\u}{\Sigma^{n}\f2}{D_{(n,t+1,s)}}&\cong \ho{\u}{\Sigma^{n}\f2}{\bigoplus_{A_{(t,2n-1,s-1)}}J(n)_{\omega}},\\
&\cong \ho{\u}{\Sigma^{2n-1}\f2}{\bigoplus_{A_{(t,2n-1,s-1)}}J(2n-1)_{\omega}},\\
&\cong \ex{\u}{s-1}{\Sigma^{2n-1}\f2}{\Sigma^{t}\f2}.
\end{align*}
And the target is:
\begin{align*}
\ho{\u}{\Sigma^{n}\f2}{C_{(n,t+1,s+1)}}&\cong \ho{\u}{\Sigma^{n}\f2}{\bigoplus_{A_{(t,n-1,s+1)}}J(n)_{\alpha}},\\
&\cong \ho{\u}{\Sigma^{n-1}\f2}{\bigoplus_{A_{(t,n-1,s+1)}}J(n-1)_{\alpha}},\\
&\cong \ex{\u}{s+1}{\Sigma^{n-1}\f2}{\Sigma^{t}\f2}.
\end{align*}
Therefore 
$$\left(\partial_{D_{(n,t+1,s)},C_{(n,t+1,s+1)}}^{s}\right)_{*}: \ex{\u}{s-1}{\Sigma^{2n-1}\f2}{\Sigma^{t}\f2}\to \ex{\u}{s+1}{\Sigma^{n-1}\f2}{\Sigma^{t}\f2}.$$
Denote by $P^{s}$ the morphisms $\left(\partial_{D_{(n,t+1,s)},C_{(n,t+1,s+1)}}^{s}\right)_{*}$ then we obtain the following exact sequences:
$$0\to\ke{P^{s-1}}\to \ex{\u}{s-2}{\Sigma^{2n+1}\f2}{\Sigma^{t}\f2}\to \ex{\u}{s}{\Sigma^{n}\f2}{\Sigma^{t}\f2}\to \cke{P^{s-1}}\to 0,$$
$$0\to \cke{P^{s-1}}\to \ex{\u}{s}{\Sigma^{n+1}\f2}{\Sigma^{t+1}\f2}\to \ke{P^{s}}\to 0.$$
We can now conclude:
\begin{thm}[Algebraic EHP sequences]
	There exist a long exact sequence for each $n$:
	\begin{equation}\label{EHP}
	\cdots\xrightarrow{H}E_{2}^{s-2,t}(S^{2n+1})\xrightarrow{P}E_{2}^{s,t}(S^{n})\xrightarrow{E}E_{2}^{s,t+1}(S^{n+1})\xrightarrow{H}E_{2}^{s-1,t}(S^{2n+1})\xrightarrow{P}\cdots\tag{EHP}
	\end{equation}
	and we call this the algebraic EHP sequence for $S^{n}$.
\end{thm}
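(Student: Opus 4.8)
The plan is to obtain \eqref{EHP} by splicing the two short exact sequences displayed immediately above the statement into a single long exact sequence, reading the arrows $E$, $H$ and $P$ off as composites of the canonical inclusions and projections that those sequences supply. First I would fix notation, abbreviating the three relevant families as $E_2^{s,t}(S^n)=\ex{\u}{s}{\Sigma^{n}\f2}{\Sigma^{t}\f2}$, $E_2^{s,t+1}(S^{n+1})=\ex{\u}{s}{\Sigma^{n+1}\f2}{\Sigma^{t+1}\f2}$ and $E_2^{s-1,t}(S^{2n+1})=\ex{\u}{s-1}{\Sigma^{2n+1}\f2}{\Sigma^{t}\f2}$, and recording that $P^{s-1}$ is exactly the map $E_2^{s-2,t}(S^{2n+1})\to E_2^{s,t}(S^n)$ whose kernel and cokernel enter both families. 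The first family is the tautological four-term exact sequence attached to the map $P^{s-1}$, while the second is the split sequence coming from the direct-sum decomposition $E_2^{s,t+1}(S^{n+1})\cong\ke{P^{s}}\oplus\cke{P^{s-1}}$ obtained above.

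Next I would define the three arrows. I set $P:=P^{s-1}$. I define $E\colon E_2^{s,t}(S^n)\to E_2^{s,t+1}(S^{n+1})$ as the composite of the cokernel projection $E_2^{s,t}(S^n)\twoheadrightarrow\cke{P^{s-1}}$, coming from the four-term sequence, with the summand inclusion $\cke{P^{s-1}}\hookrightarrow E_2^{s,t+1}(S^{n+1})$, coming from the split sequence. I define $H\colon E_2^{s,t+1}(S^{n+1})\to E_2^{s-1,t}(S^{2n+1})$ as the composite of the summand projection $E_2^{s,t+1}(S^{n+1})\twoheadrightarrow\ke{P^{s}}$ from the split sequence with the kernel inclusion $\ke{P^{s}}\hookrightarrow E_2^{s-1,t}(S^{2n+1})$ read off from the four-term sequence taken at homological degree $s+1$.

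The verification of exactness at each position then reduces to an elementary diagram chase. At $E_2^{s,t}(S^n)$ the inclusion factor of $E$ is injective, so $\ke{E}$ is the kernel of the cokernel projection, which is $\im{P^{s-1}}=\im{P}$. At $E_2^{s,t+1}(S^{n+1})$ surjectivity of the first factor of $E$ gives that $\im{E}$ is the summand $\cke{P^{s-1}}$, while injectivity of the second factor of $H$ shows that $\ke{H}$ is the kernel of the summand projection onto $\ke{P^{s}}$; by the splitting that kernel is the complementary summand $\cke{P^{s-1}}$, whence $\im{E}=\ke{H}$. At $E_2^{s-1,t}(S^{2n+1})$ surjectivity of the projection shows that $\im{H}$ is the image of the inclusion $\ke{P^{s}}\hookrightarrow E_2^{s-1,t}(S^{2n+1})$, namely $\ke{P^{s}}=\ke{P}$. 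Each step appeals only to injectivity of the inclusions, surjectivity of the (co)kernel maps, and the splitting, so no genuine computation intervenes.

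I expect the only real obstacle to be index bookkeeping rather than anything conceptual: one must take the kernel term of the four-term sequence at homological degree $s+1$ so that it meets the cokernel term that appears at degree $s$, and one must check that the relabelling $n\mapsto n+1$ performed just before the statement correctly aligns the $\Sigma^{2n-1}$ and $\Sigma^{n-1}$ of the general computation with the $S^{2n+1}$ and $S^{n}$ occurring in \eqref{EHP}. Once the indices are pinned down, exactness is purely formal, and I would close by remarking that the labels $E$, $H$, $P$ are justified only a posteriori through comparison with the geometric EHP sequence; the statement itself demands nothing beyond the splicing just described.
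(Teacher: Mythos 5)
Your proposal is correct and is precisely the argument the paper intends: the paper's ``proof'' consists of deriving the four-term exact sequence attached to $P^{s-1}$ and the split sequence coming from the decomposition of $\ex{\u}{s}{\Sigma^{n+1}\f2}{\Sigma^{t+1}\f2}$ as $\ke{P^{s}}\oplus\cke{P^{s-1}}$, and then concluding by exactly the splicing you describe. Your explicit definitions of $E$ and $H$ as composites through $\cke{P^{s-1}}$ and $\ke{P^{s}}$, together with the index relabelling $n\mapsto n+1$, simply make explicit what the paper leaves to the reader.
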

\section{A special case}
This section studies the special case of EHP sequence for $S^{2^{k}-1}$. In particular we show that in this case the morphisms $P$ are trivial. 
\begin{lem}
    If $n=2^{k}$ then the morphism $\left(\partial_{D_{(n,t+1,s)},C_{(n,t+1,s+1)}}^{s}\right)_{*}$ is trivial. 	
\end{lem}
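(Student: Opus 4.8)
The plan is to reduce the vanishing of $P^{s}=\left(\partial_{D_{(n,t+1,s)},C_{(n,t+1,s+1)}}^{s}\right)_{*}$ to an indecomposability property of $Sq^{2^{k}}$. First recall that $\partial_{D_{(n,t+1,s)},C_{(n,t+1,s+1)}}^{s}$ is, by construction, the restriction of the corner map $\delta_{4}^{s-1}\colon\divn{}{\aug{}{BG(t)^{s-1}}}\to\aug{}{BG(t)^{s+1}}$ to the summands isomorphic to $J(n)$ on both sides. Since $D_{(n,t+1,s)}$ and $C_{(n,t+1,s+1)}$ are direct sums of copies of the single module $J(n)$, and $\ho{\u}{J(n)}{J(n)}\cong\f2$ is concentrated in internal degree $0$, every matrix coefficient of this map is either $0$ or the identity. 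As $\ho{\u}{\Sigma^{n}\f2}{J(n)}\cong\f2$ and the identity endomorphism of $J(n)$ induces the identity on it, the map $P^{s}$ is exactly the $\f2$-matrix recording which copies of $J(n)$ in $D_{(n,t+1,s)}$ are connected to which copies in $C_{(n,t+1,s+1)}$ by the identity. Hence $P^{s}=0$ if and only if $\delta_{4}^{s-1}$ has no identity coefficient between these summands, and it suffices to prove that for $n=2^{k}$ no such coefficient occurs.

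Next I would feed this into part (1) of the BG algorithm, with $m=n$. It says that a copy $J(n)_{1}$ of $J(n)$ in $\divn{}{\aug{}{BG(t)^{s-1}}}$ is connected to a copy $J(n)_{2}$ in $\aug{}{BG(t)^{s+1}}$ by the identity precisely when the unique summand $J(2n)_{3}\subset\aug{}{BG(t)^{s-1}}$ lying above $J(n)_{1}$ via the Mahowald surjection $\bullet Sq^{n}$ is connected to $J(n)_{2}$ by a morphism $\bullet(Sq^{n}+\theta)$. Thus the lemma becomes: for $n=2^{k}$, no summand $J(2n)$ of $\aug{}{BG(t)^{s-1}}$ is connected to a summand $J(n)$ of $\aug{}{BG(t)^{s+1}}$ by an operation whose admissible expansion contains the monomial $Sq^{n}=Sq^{2^{k}}$ (the term $\theta$ being a combination of admissibles of strictly smaller excess).

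The heart of the argument is to identify this putative coefficient with the defect measuring the failure of the augmented resolution to be a complex. Expanding $\partial^{s+1}\circ\partial^{s}=0$ in block form and comparing the $\aug{}{BG(t)}\to\aug{}{BG(t)}$ component gives the relation $\delta_{4}^{s}\circ\delta_{3}^{s}=\delta_{1}^{s+1}\circ\delta_{1}^{s}$, where $\delta_{3}$ is the diagonal of Mahowald surjections. Reading the coefficient of this relation from $J(2n)$ through $J(n)$ (via $\bullet Sq^{n}$) back to $J(n)$, an identity coefficient of $\delta_{4}$ at index $n$ forces the coefficient of $\delta_{1}^{s+1}\circ\delta_{1}^{s}$ from $J(2n)$ to $J(n)$ to equal $Sq^{n}$ modulo lower-excess terms. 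But $\delta_{1}=\aug{}{\partial_{t}}$ and $\partial_{t}^{s+1}\partial_{t}^{s}=0$, so this coefficient is precisely the correction produced by the non-multiplicativity of the augmentation on $BG(t)$, and every such correction is a composite of a Mahowald surjection with a differential coefficient of the minimal resolution, hence lies in $\overline{\mathcal A_{2}}\cdot\overline{\mathcal A_{2}}$. Since $Sq^{2^{k}}$ is indecomposable, $Sq^{2^{k}}\notin\overline{\mathcal A_{2}}\cdot\overline{\mathcal A_{2}}$, so its coefficient in any such decomposable expression vanishes. Therefore the defect never contains $Sq^{2^{k}}$, the identity coefficient cannot occur, and $P^{s}=0$.

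The main obstacle I anticipate is the third step: making rigorous that the defect $\delta_{1}^{s+1}\circ\delta_{1}^{s}$ is genuinely decomposable. This requires unwinding, from Lemma 1.5.0.24, how $\delta_{1}$ and the Mahowald surjections interact in the construction of $DA(t+1)$ and checking that no indecomposable $Sq^{2^{i}}$ can survive in these coefficients. I would first verify the cases $n=2,4$ by hand from the explicit form of $BG(t)$, and then propagate the statement along the Mahowald short exact sequences. Once decomposability is in place, the contrast with the indecomposability of $Sq^{2^{k}}$ closes the argument; it also explains why the hypothesis is essential, since for, say, $n=3$ one has $Sq^{3}=Sq^{1}Sq^{2}\in\overline{\mathcal A_{2}}\cdot\overline{\mathcal A_{2}}$, so an identity coefficient, and thus a nonzero $P$, may appear.
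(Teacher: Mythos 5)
Your proposal is correct and takes essentially the same route as the paper: reduce nontriviality of $P^{s}$ to the existence of an identity coefficient, invoke item (1) of the BG algorithm to convert that into $Sq^{2^{k}}$ appearing in a composite of positive-degree differential coefficients, and contradict the indecomposability of $Sq^{2^{k}}$. The material in your third and fourth paragraphs (the relation $\delta_{4}^{s}\circ\delta_{3}^{s}=\delta_{1}^{s+1}\circ\delta_{1}^{s}$ and the decomposability of its coefficients) is precisely the content the paper delegates to the cited BG algorithm from \cite{Cuo14b}, so it is a welcome elaboration of the same argument rather than a different proof.
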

\begin{proof}
	Recall that 
	$$\partial_{D_{(2^{k},t+1,s)},C_{(2^{k},t+1,s+1)}}^{s}:\bigoplus_{A_{(t,2^{k+1}-1,s-1)}}J(2^{k})_{\omega}\to \bigoplus_{A_{(t,2^{k}-1,s+1)}}J(2^{k})_{\alpha}.$$
	Suppose that this morphism is not trivial then there must exist an identity coefficient in the matrix form. The first item in the definition of the BG algorithm assures that in this case, the Steenrod square $Sq^{2^{k}}$ can be factorized as a product of other Steenrod operations. This is obviously false since $Sq^{2^{k}}$ is indecomposable. 	
	Therefore $\partial_{D_{(2^{k},t+1,s)},C_{(2^{k},t+1,s+1)}}^{s}$ is trivial.
\end{proof}
We get the following consequence:
\begin{thm}[James splitting]
	There are short exact sequences
	$$0\to E^{s,t}_{2}(S^{2^{k}-1})\to E^{s,t+1}_{2}(S^{2^{k}})\to E^{s-1,t}_{2}(S^{2^{k+1}-1})\to 0$$
	for every $k$.
\end{thm}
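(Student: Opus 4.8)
The plan is to deduce the James splitting as the degenerate form of the algebraic EHP sequence of the previous section, specialised to $n=2^{k}-1$. With this choice the three spheres occurring in the EHP sequence become $S^{n}=S^{2^{k}-1}$, $S^{n+1}=S^{2^{k}}$ and $S^{2n+1}=S^{2^{k+1}-1}$, which are precisely the three groups in the statement. It therefore suffices to prove that every connecting homomorphism $P$ in this instance of the EHP sequence is zero; a long exact sequence all of whose alternate maps vanish automatically breaks into short exact pieces.

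First I would identify which morphisms $P$ actually occur. Tracing through the computation carried out before the EHP theorem, the map $P\colon E_{2}^{s-2,t}(S^{2n+1})\to E_{2}^{s,t}(S^{n})$ is the induced homomorphism $\left(\partial_{D_{(n+1,t+1,s-1)},C_{(n+1,t+1,s)}}^{s-1}\right)_{*}$, so that every $P$ here is of the shape $\left(\partial_{D_{(m,t+1,s)},C_{(m,t+1,s+1)}}^{s}\right)_{*}$ with parameter $m=n+1$. For $n=2^{k}-1$ this parameter equals $m=2^{k}$, and the preceding Lemma --- whose proof only invokes the indecomposability of $Sq^{2^{k}}$ --- applies verbatim for every $s$, giving $\left(\partial_{D_{(2^{k},t+1,s)},C_{(2^{k},t+1,s+1)}}^{s}\right)_{*}=0$.

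Granting that all the $P$ vanish, I would finish by exactness. In the EHP sequence the map $E$ is immediately preceded by $P=0$, so $\ke{E}=\im{P}=0$ and $E$ is injective; dually $H$ is immediately followed by $P=0$, so $\im{H}=\ke{P}$ is the whole group and $H$ is surjective; exactness at the middle term gives $\im{E}=\ke{H}$. Hence $0\to E_{2}^{s,t}(S^{n})\xrightarrow{E}E_{2}^{s,t+1}(S^{n+1})\xrightarrow{H}E_{2}^{s-1,t}(S^{2n+1})\to 0$ is short exact, and substituting $n+1=2^{k}$, $2n+1=2^{k+1}-1$ produces the asserted sequence. Equivalently, one reads the same conclusion off the short exact sequence $0\to\cke{P^{s-1}}\to E_{2}^{s,t+1}(S^{n+1})\to\ke{P^{s}}\to 0$ obtained earlier: once $P^{s-1}=0$ its cokernel is the full group $E_{2}^{s,t}(S^{2^{k}-1})$, and once $P^{s}=0$ its kernel is the full group $E_{2}^{s-1,t}(S^{2^{k+1}-1})$.

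The only delicate point is the index bookkeeping of the second paragraph: one must confirm that the connecting maps of the EHP sequence for $S^{2^{k}-1}$ are exactly those attached to the $J(2^{k})$-summands, so that the Lemma is applicable, and that it kills them uniformly in $s$. Once this matching is checked, the collapse of the long exact sequence is automatic and no further computation is required.
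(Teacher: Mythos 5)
Your proposal is correct and follows exactly the paper's (largely implicit) argument: the $P$ maps in the EHP sequence for $S^{2^{k}-1}$ are the $\left(\partial_{D_{(2^{k},t+1,s)},C_{(2^{k},t+1,s+1)}}^{s}\right)_{*}$, the Lemma on the indecomposability of $Sq^{2^{k}}$ kills them, and the exact sequence $0\to \cke{P^{s-1}}\to E_{2}^{s,t+1}(S^{2^{k}})\to\ke{P^{s}}\to 0$ then degenerates to the asserted short exact sequence. Your index bookkeeping (the parameter is $m=n+1=2^{k}$, uniformly in $s$) is precisely the point the paper leaves unstated when it says the theorem is a consequence of the Lemma.
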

\bibliographystyle{alpha}
\bibliography{Thuvien}
\end{document}